\newtheorem{theorem}{Theorem}[section]
\numberwithin{equation}{section}
\title{Some Harmonic Number Identities\\ involving certain Reciprocals}
\author{M.J. Kronenburg}
\begin{document}

\maketitle

\begin{abstract}
Some finite series of harmonic numbers involving certain reciprocals are evaluated.
Products of such reciprocals are expanded in a sum of the individual reciprocals,
leading to a computer program. A list of examples is provided.
\end{abstract}

\noindent
\textbf{Keywords}: harmonic number.\\
\textbf{MSC 2010}: 11B99

\section{Definitions and Basic Identities}

The generalized harmonic numbers used in this paper are:
\begin{equation}
  H_n^{(m)} = \sum_{k=1}^n \frac{1}{k^m}
\end{equation}
from which follows that $H_0^{(m)}=0$.
The traditional harmonic numbers are:
\begin{equation}
 H_n=H_n^{(1)}
\end{equation}
A well known identity is \cite{GKP94,K97,SLL89,S90}:
\begin{equation}\label{sum1okhk}
 \sum_{k=1}^n \frac{1}{k} H_k = \frac{1}{2} ( H_n^2 + H_n^{(2)} )
\end{equation}
and \cite{K97,K11a}:
\begin{equation}\label{sum1okp1hk}
 \sum_{k=0}^n \frac{1}{k+1}H_k = \frac{1}{2} ( H_{n+1}^2 - H_{n+1}^{(2)} )
\end{equation}
and \cite{K11b}:
\begin{equation}
 \sum_{k=1}^n\frac{1}{k}H_{n-k} = H_n^2 - H_n^{(2)}
\end{equation}
\begin{equation}\label{sum1okp1hnmk}
 \sum_{k=0}^n \frac{1}{k+1} H_{n-k} = H_{n+1}^2 - H_{n+1}^{(2)}
\end{equation}

When $a\leq b$ are two integers and $\{x_k\}$ and $\{y_k\}$ are two sequences of complex numbers,
and $\{s_k\}$ the sequence of complex numbers defined by:
\begin{equation}
 s_k = \sum_{i=a}^k x_i
\end{equation}
then there is the following summation by parts formula \cite{K11a}:
\begin{equation}\label{sumparts}
 \sum_{k=a}^{b-1} x_k y_k = s_{b-1} y_b - \sum_{k=a}^{b-1} s_k ( y_{k+1} - y_k )
\end{equation}

\section{Harmonic Number Identities with a Reciprocal}

\begin{theorem}
For nonnegative integer $n$ and integer $p>0$:
\begin{equation}\label{hrec1}
\begin{split}
 \sum_{k=0}^n \frac{1}{k+p}H_k & = H_{n+p}(H_{n+1}+H_{p-1}) 
  - \frac{1}{2} [ (H_{n+1}+H_{p-1})^2 + H_{n+1}^{(2)} + H_{p-1}^{(2)} ] \\
 & - \sum_{k=0}^{p-2} \frac{1}{n+k+2} H_k
\end{split}
\end{equation}
\end{theorem}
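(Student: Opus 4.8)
The plan is to reduce the left side of \eqref{hrec1} by two applications of the summation by parts formula \eqref{sumparts}, after which the known identities \eqref{sum1okhk} and \eqref{sum1okp1hk} dispose of the ``diagonal'' sums that appear.

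First I would apply \eqref{sumparts} with $x_k=1/(k+p)$, $y_k=H_k$, $a=0$, $b=n+1$. Then $s_k=\sum_{i=0}^k 1/(i+p)=H_{k+p}-H_{p-1}$ and $y_{k+1}-y_k=1/(k+1)$, so the terms carrying $H_{p-1}$ cancel against $H_{p-1}\sum_{k=0}^n 1/(k+1)=H_{p-1}H_{n+1}$, leaving
\[
  \sum_{k=0}^n\frac{1}{k+p}H_k=H_{n+p}H_{n+1}-T,\qquad T:=\sum_{k=0}^n\frac{1}{k+1}H_{k+p}.
\]

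Next I would expand $H_{k+p}=H_k+\sum_{j=1}^p 1/(k+j)$ and split $T$. The part $\sum_{k=0}^n\frac{1}{k+1}H_k$ is given by \eqref{sum1okp1hk}. In the double sum $\sum_{j=1}^p\sum_{k=0}^n \frac1{(k+1)(k+j)}$ the $j=1$ term is $H_{n+1}^{(2)}$, and for $j\ge 2$ the splitting $\frac1{(k+1)(k+j)}=\frac1{j-1}\bigl(\frac1{k+1}-\frac1{k+j}\bigr)$ makes the inner sum telescope; after reindexing by $l=j-1$ this part becomes $H_{n+1}H_{p-1}+\sum_{l=1}^{p-1}\frac1l H_l-R$, where $\sum_{l=1}^{p-1}\frac1l H_l$ is evaluated by \eqref{sum1okhk} and $R:=\sum_{l=1}^{p-1}\frac1l H_{n+l+1}$ is the only term not yet in closed form. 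To handle $R$ I would write $H_{n+l+1}=H_{n+p}-\sum_{j=l+1}^{p-1}\frac1{n+j+1}$, interchange the order of summation, recognize the inner sum as $H_{j-1}$, and reindex (using $H_0=0$) to get $R=H_{n+p}H_{p-1}-\sum_{k=0}^{p-2}\frac1{n+k+2}H_k$, which is exactly the correction term in \eqref{hrec1}; alternatively one can obtain $R$ by a second use of \eqref{sumparts} with $x_l=1/l$, $y_l=H_{n+l+1}$.

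Substituting all pieces back into $T$ and then into $\sum_{k=0}^n\frac1{k+p}H_k=H_{n+p}H_{n+1}-T$, and using $\tfrac12 H_{n+1}^2+H_{n+1}H_{p-1}+\tfrac12 H_{p-1}^2=\tfrac12(H_{n+1}+H_{p-1})^2$ to collapse the quadratic terms, yields \eqref{hrec1}. The individual manipulations are elementary; the main thing to watch is keeping the summation ranges consistent through the reindexings (in particular in the evaluation of $R$), and one should check the degenerate case $p=1$, where all the $j\ge2$ and $l$-sums are empty and \eqref{hrec1} reduces to \eqref{sum1okp1hk}.
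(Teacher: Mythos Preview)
Your proposal is correct and follows essentially the same route as the paper's proof: summation by parts with $x_k=1/(k+p)$ and $y_k=H_k$, the expansion $H_{k+p}=H_k+\sum_{j=1}^p 1/(k+j)$, partial fractions on $1/((k+1)(k+j))$, the identities \eqref{sum1okhk} and \eqref{sum1okp1hk}, and an interchange of summation to produce the tail $\sum_{k=0}^{p-2}\frac{1}{n+k+2}H_k$. The only cosmetic differences are that you start at $k=0$ and notice the $H_{p-1}$ cancellation immediately (the paper carries an $H_p$ through one more step), and that you expand $H_{n+l+1}$ relative to $H_{n+p}$ rather than to $H_{n+1}$ when setting up the double sum; both lead to the same interchange.
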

\begin{proof}
Summation by parts (\ref{sumparts}) with $x_k=1/(k+p)$ and $y_k=H_k$ yields:
\begin{equation}
 \sum_{k=1}^n \frac{1}{k+p} H_k = (H_{n+p}-H_p)H_{n+1} - 
   \sum_{k=1}^n \frac{1}{k+1} ( H_{k+p} - H_p ) 
\end{equation}
Using:
\begin{equation}
 H_{k+p} = H_k + \sum_{s=1}^p \frac{1}{k+s}
\end{equation}
and for $s>1$:
\begin{equation}
 \frac{1}{(k+s)(k+1)} = \frac{1}{s-1} ( \frac{1}{k+1} - \frac{1}{k+s} )
\end{equation}
yields:
\begin{equation}
 \frac{1}{k+1} H_{k+p} = \frac{1}{k+1} H_k + \frac{1}{(k+1)^2} 
  + \sum_{s=2}^p \frac{1}{s-1} ( \frac{1}{k+1} - \frac{1}{k+s} )
\end{equation}
Performing the summation over $n$ and using (\ref{sum1okp1hk}) yields:
\begin{equation}
\begin{split}
 \sum_{k=1}^n \frac{1}{k+p}H_k & = H_{n+p}H_{n+1} - \frac{1}{2} ( H_{n+1}^2 + H_{n+1}^{(2)} )
  - H_p + 1 \\
  & + \sum_{s=1}^{p-1} \frac{1}{s} ( H_{n+s+1} - H_{n+1} - H_{s+1} + 1 )
\end{split}
\end{equation}
Using
\begin{equation}
 H_{n+s+1} - H_{n+1} = \sum_{k=1}^s \frac{1}{n+k+1}
\end{equation}
and changing the order of summation over $s$ and $k$:
\begin{equation}
\begin{split}
 \sum_{s=1}^{p-1} \frac{1}{s} \sum_{k=1}^s \frac{1}{n+k+1} & =
  \sum_{k=1}^{p-1} \frac{1}{n+k+1} \sum_{s=k}^{p-1} \frac{1}{s} \\
 & = \sum_{k=1}^{p-1} \frac{1}{n+k+1} ( H_{p-1} - H_{k-1} ) \\
 & = H_{p-1} ( H_{n+p} - H_{n+1} ) - \sum_{k=0}^{p-2} \frac{1}{n+k+2} H_k
\end{split}
\end{equation}
and using $H_{s+1}=H_s+1/(s+1)$
and (\ref{sum1okhk}) and $1/(s(s+1))=1/s-1/(s+1)$ yields the theorem.
\end{proof}

\begin{theorem}
For nonnegative integer $n$ and integer $p>0$:
\begin{equation}\label{hrec2}
\begin{split}
 \sum_{k=0}^n \frac{1}{k+p}H_{n-k} & = H_{n+1} ( H_{n+p} - H_{p-1} )
  - \frac{1}{2}[ H_{n+1}^2 - H_{n+p}^2 + H_{n+1}^{(2)} + H_{n+p}^{(2)} ] \\
 & - \sum_{k=0}^{p-2} \frac{1}{n+k+2} H_k
\end{split}
\end{equation}
\end{theorem}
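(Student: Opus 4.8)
The plan is to mirror the proof of the previous theorem: apply summation by parts, peel off a single reciprocal sum, and reduce everything to the basic identities of Section~1. First I would note that the $k=n$ term on the left vanishes because $H_0=0$, so the sum effectively runs from $k=0$ to $n-1$. Then I apply the summation by parts formula (\ref{sumparts}) with $x_k=1/(k+p)$ and $y_k=H_{n-k}$ on the range $a=0$, $b=n$. Here $s_k=\sum_{i=0}^k 1/(i+p)=H_{k+p}-H_{p-1}$, the boundary term $s_{n-1}y_n$ is zero (again $H_0=0$), and $y_{k+1}-y_k=-1/(n-k)$, so the sum collapses to $\sum_{k=0}^{n-1}(H_{k+p}-H_{p-1})/(n-k)$. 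Re-indexing by $j=n-k$ turns this into $\sum_{j=1}^n H_{n+p-j}/j - H_{p-1}H_n$.

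Next I would expand $H_{n+p-j}=H_{n-j}+\sum_{s=1}^p 1/(n+s-j)$, which is legitimate since $0\le n-j$ throughout. The term $\sum_{j=1}^n H_{n-j}/j$ is exactly $H_n^2-H_n^{(2)}$ by the convolution identity $\sum_{k=1}^n \frac1k H_{n-k}=H_n^2-H_n^{(2)}$ of Section~1. For the remaining double sum I swap the order of summation and use the partial fraction $1/(j(n+s-j))=\frac1{n+s}\bigl(1/j+1/(n+s-j)\bigr)$ together with $\sum_{j=1}^n 1/(n+s-j)=H_{n+s-1}-H_{s-1}$, reducing it to $\sum_{s=1}^p (H_n+H_{n+s-1}-H_{s-1})/(n+s)$. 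This splits into three pieces: the first is $H_n(H_{n+p}-H_n)$; the second, after writing $H_{n+s-1}=H_{n+s}-1/(n+s)$ and shifting the index to $i=n+s$, is evaluated by (\ref{sum1okhk}) as $\tfrac12(H_{n+p}^2+H_{n+p}^{(2)})-\tfrac12(H_n^2+H_n^{(2)})-(H_{n+p}^{(2)}-H_n^{(2)})$; the third, after two index shifts and the substitution $H_{m+1}=H_m+1/(m+1)$, produces the desired tail $\sum_{k=0}^{p-2}H_k/(n+k+2)$ plus a telescoping remainder $\tfrac1{n+1}(H_{p-1}-H_{n+p}+H_{n+1})$ coming from $1/((k+1)(n+k+2))=\frac1{n+1}\bigl(1/(k+1)-1/(n+k+2)\bigr)$.

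Finally I would collect all the contributions and simplify, repeatedly using $H_n=H_{n+1}-1/(n+1)$ and $H_n^{(2)}=H_{n+1}^{(2)}-1/(n+1)^2$ to convert every $H_n$ and $H_n^{(2)}$ into $H_{n+1}$ and $H_{n+1}^{(2)}$; the stray $1/(n+1)$ and $1/(n+1)^2$ fragments then cancel and one is left with (\ref{hrec2}), whose $p=1$ instance agrees with (\ref{sum1okp1hnmk}). I expect the summation by parts and the double-sum manipulation to go through smoothly; the main obstacle is purely the bookkeeping in this last consolidation step, where the quadratic terms in $H_n$, $H_{n+p}$ and $H_{p-1}$, the several $H^{(2)}$ terms, and the loose $1/(n+1)$ pieces must all be combined in exactly the right way — routine, but unforgiving of sign errors.
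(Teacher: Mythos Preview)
Your proof is correct and follows essentially the same strategy as the paper: summation by parts, expansion of $H_{n+p-j}$ into $H_{n-j}$ plus a finite tail, partial fractions, and the basic convolution identities of Section~1. The only difference is cosmetic: the paper first flips the sum and applies (\ref{sumparts}) with $x_k=1/(n+p-k)$, $y_k=H_k$, which shifts all intermediate indices by one and invokes (\ref{sum1okp1hnmk}) and (\ref{sum1okp1hk}) where you invoke their unshifted counterparts, but the computation is otherwise parallel.
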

\begin{proof}
Summation by parts (\ref{sumparts}) with $x_k=1/(n+p-k)$ and $y_k=H_k$ yields:
\begin{equation}
\begin{split}
 \sum_{k=0}^n \frac{1}{k+p} H_{n-k} & = \sum_{k=1}^n \frac{1}{n+p-k} H_k \\
 & = (H_{n+p-1}-H_{p-1})H_{n+1} - \sum_{k=1}^n \frac{1}{k+1} ( H_{n+p-1} - H_{n+p-k-1} ) 
\end{split}
\end{equation}
Using:
\begin{equation}
 H_{n+p-k-1} = H_{n-k} + \sum_{s=1}^{p-1} \frac{1}{n+s-k}
\end{equation}
and for $s>0$:
\begin{equation}
 \frac{1}{(n+s-k)(k+1)} = \frac{1}{n+s+1} ( \frac{1}{k+1} + \frac{1}{n+s-k} )
\end{equation}
yields:
\begin{equation}
 \frac{1}{k+1} H_{n+p-k-1} = \frac{1}{k+1} H_{n-k} 
  + \sum_{s=1}^{p-1} \frac{1}{n+s+1} ( \frac{1}{k+1} + \frac{1}{n+s-k} )
\end{equation}
Performing the summation over $n$ and using (\ref{sum1okp1hnmk}) yields:
\begin{equation}
\begin{split}
 \sum_{k=1}^n \frac{1}{k+p}H_{n-k} & = H_{n+p-1} - H_{p-1}H_{n+1} + H_{n+1}^2 - H_{n+1}^{(2)} - H_n \\
  & + \sum_{s=1}^{p-1} \frac{1}{n+s+1} ( H_{n+s-1} + H_{n+1} - H_{s-1} - 1 )
\end{split}
\end{equation}
Using
\begin{equation}
 \sum_{s=1}^{p-1} \frac{1}{n+s+1}(H_{n+1}-1) = (H_{n+1}-1) (H_{n+p} - H_{n+1})
\end{equation}
and $H_{n+s-1}=H_{n+s}-1/(n+s)$ and $1/((n+s)(n+s+1))=1/(n+s)-1/(n+s+1)$ and with (\ref{sum1okp1hnmk}):
\begin{equation}
\begin{split}
\sum_{s=0}^{p-2} \frac{1}{n+s+2}H_{n+s+1} & = \sum_{s=0}^{n+p-1}\frac{1}{k+1}H_k - \sum_{s=0}^n\frac{1}{k+1}H_k \\
 & = \frac{1}{2}( H_{n+p}^2 - H_{n+p}^{(2)} - H_{n+1}^2 + H_{n+1}^{(2)} )
\end{split}
\end{equation}
yields the theorem.
\end{proof}

\begin{theorem}
For nonnegative integer $n$ and integer $0\leq p\leq n$:
\begin{equation}
\begin{split}
 \sum_{k=p+1}^n \frac{1}{k-p}H_k & =  
  \frac{1}{2} [ (H_{n-p+1}+H_p)^2 + H_{n-p+1}^{(2)} + H_p^{(2)} ] 
  - H_{n+1} ( H_p + \frac{1}{n-p+1} ) \\
 & + \sum_{k=0}^{p-1} \frac{1}{n-p+k+2} H_k
\end{split}
\end{equation}
\end{theorem}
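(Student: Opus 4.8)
The plan is to reduce this identity to equation~(\ref{hrec1}) (the first theorem of this section) by a shift of the summation variable followed by a single summation by parts.

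First I would substitute $j=k-p$, turning the left-hand side into $\sum_{j=1}^{n-p}\frac{1}{j}H_{j+p}$. Then I would apply the summation by parts formula~(\ref{sumparts}) with $a=1$, $b=n-p+1$, $x_j=1/j$ and $y_j=H_{j+p}$. Here $s_j=\sum_{i=1}^j 1/i=H_j$, $y_{j+1}-y_j=1/(j+p+1)$, $s_{b-1}=H_{n-p}$ and $y_b=H_{n+1}$, so
\begin{equation}
 \sum_{j=1}^{n-p}\frac{1}{j}H_{j+p}=H_{n-p}H_{n+1}-\sum_{j=1}^{n-p}\frac{1}{j+p+1}H_j .
\end{equation}

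The key point is that the $j=0$ term of the remaining sum vanishes, so
\begin{equation}
 \sum_{j=1}^{n-p}\frac{1}{j+p+1}H_j=\sum_{j=0}^{n-p}\frac{1}{j+(p+1)}H_j
\end{equation}
is precisely the left-hand side of~(\ref{hrec1}) with $n$ replaced by $n-p$ and $p$ replaced by $p+1$; the hypotheses hold since $n-p\geq 0$ and $p+1>0$. Substituting that closed form — noting $H_{(n-p)+(p+1)}=H_{n+1}$, $H_{(p+1)-1}=H_p$, $H_{(n-p)+1}=H_{n-p+1}$, and that its residual sum runs over $0\leq k\leq p-1$ with denominators $n-p+k+2$ — and then combining the two $H_{n+1}$-terms via $H_{n-p}-H_{n-p+1}=-1/(n-p+1)$ yields exactly the stated right-hand side.

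I expect the only real work to be bookkeeping: getting the summation by parts limits right ($b-1=n-p$ versus $b=n-p+1$), checking the degenerate case $p=n$ where both sides are empty sums equal to zero, and carrying out the parameter substitution $(n,p)\mapsto(n-p,p+1)$ in~(\ref{hrec1}) carefully so the shifted harmonic numbers and the index range of the leftover sum come out as written. No identity beyond~(\ref{hrec1}) and~(\ref{sumparts}) is required; alternatively one could expand $H_{j+p}=H_j+\sum_{s=1}^p 1/(j+s)$ and apply~(\ref{sum1okhk}) together with the partial fraction $1/(j(j+s))=\frac1s(1/j-1/(j+s))$, but the reduction above is more economical.
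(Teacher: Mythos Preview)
Your proposal is correct and takes essentially the same route as the paper: both arrive at $H_{n-p}H_{n+1}-\sum_{j=1}^{n-p}\frac{1}{j+p+1}H_j$ via summation by parts and then invoke (\ref{hrec1}) with $(n,p)\mapsto(n-p,p+1)$. The only cosmetic difference is that the paper applies summation by parts with $x_k=1/(k-p)$, $y_k=H_k$ and shifts the index afterward, whereas you shift first and take $x_j=1/j$, $y_j=H_{j+p}$; the intermediate result and the remaining bookkeeping are identical.
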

\begin{proof}
Summation by parts (\ref{sumparts}) with $x_k=1/(k-p)$ and $y_k=H_k$ yields:
\begin{equation}
\begin{split}
 \sum_{k=p+1}^n \frac{1}{k-p} H_k & = H_{n+1} H_{n-p} - 
   \sum_{k=p+1}^n \frac{1}{k+1} H_{k-p} \\
 & = H_{n+1} H_{n-p} - \sum_{k=1}^{n-p} \frac{1}{k+p+1} H_k
\end{split}
\end{equation}
The last sum is (\ref{hrec1}) with $p$ replaced by $p+1$ and $n$ by $n-p$,
which yields the theorem.
\end{proof}

\begin{theorem}
For nonnegative integer $n$ and integer $0\leq p\leq n$:
\begin{equation}
 \sum_{k=p+1}^n \frac{1}{k-p}H_{n-k} =  H_{n-p}^2 - H_{n-p}^{(2)}
\end{equation}
\end{theorem}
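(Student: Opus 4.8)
The plan is to reduce this sum directly to the known convolution identity
\[
 \sum_{k=1}^m \frac{1}{k} H_{m-k} = H_m^2 - H_m^{(2)}
\]
stated in Section~1 (the identity attributed to \cite{K11b}), by a single shift of the summation index. First I would substitute $j = k-p$, so that $k = j+p$ and the range $k = p+1,\dots,n$ becomes $j = 1,\dots,n-p$; since $0\le p\le n$, the quantity $m := n-p$ is a nonnegative integer. Under this substitution the weight $1/(k-p)$ becomes $1/j$ and the harmonic number $H_{n-k}$ becomes $H_{n-p-j} = H_{m-j}$, which is well defined because $0 \le m-j \le m-1$ throughout the range.

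This gives
\[
 \sum_{k=p+1}^n \frac{1}{k-p}H_{n-k} = \sum_{j=1}^{n-p} \frac{1}{j}\, H_{(n-p)-j},
\]
and applying the cited identity with $m = n-p$ yields $H_{n-p}^2 - H_{n-p}^{(2)}$ at once, which is the claim. The degenerate case $p=n$ is consistent as well: the left sum is empty and the right side is $H_0^2 - H_0^{(2)} = 0$.

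I do not expect any real obstacle here. The only thing worth noting is the structural reason the proof is so short: after the shift, the reciprocal weight and the argument of the harmonic number are exactly "in step," so the sum is a genuine convolution $\sum_j (1/j)\,H_{m-j}$ with the common bound $m=n-p$ in both factors. This is in contrast to Theorems~2.1--2.3, where the reciprocal weight $1/(k\pm p)$ and the argument of $H$ are offset, forcing the summation-by-parts formula~(\ref{sumparts}) together with partial-fraction splittings and the auxiliary identities (\ref{sum1okhk})--(\ref{sum1okp1hnmk}) to be invoked.
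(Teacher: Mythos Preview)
Your proof is correct and essentially identical to the paper's: both reduce the sum by a single index shift to a known convolution identity from Section~1. The only cosmetic difference is that you shift by $p$ and invoke $\sum_{k=1}^m \tfrac{1}{k}H_{m-k}=H_m^2-H_m^{(2)}$ with $m=n-p$, whereas the paper shifts by $p+1$ and invokes the equivalent form~(\ref{sum1okp1hnmk}) with $n$ replaced by $n-p-1$.
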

\begin{proof}
\begin{equation}
 \sum_{k=p+1}^n \frac{1}{k-p}H_{n-k} = \sum_{k=0}^{n-p-1} \frac{1}{k+1} H_{n-p-k-1}
\end{equation}
The last sum is (\ref{sum1okp1hnmk}) with $n$ replaced by $n-p-1$, which yields the theorem.
\end{proof}

\section{Products of Reciprocals}

A finite product of these reciprocals with different $p$'s can be written as
a sum of the individual reciprocals.
The formula for two reciprocals is, where $p_1\neq p_2$:
\begin{equation}
\begin{split}
 \frac{1}{(k+p_1)(k+p_2)} & = \frac{1}{p_1-p_2}\frac{(k+p_1)-(k+p_2)}{(k+p_1)(k+p_2)} \\
 & = \frac{1}{p_1-p_2}( \frac{1}{k+p_2} - \frac{1}{k+p_1} )
\end{split}
\end{equation}
The formula for three reciprocals is, where $p_1\neq p_2\neq p_3$:
\begin{equation}
\begin{split}
 \frac{1}{(k+p_1)(k+p_2)(k+p_3)} & = \frac{1}{p_1-p_2}[ ( \frac{1}{p_2-p_3} - \frac{1}{p_1-p_3} ) \frac{1}{k+p_3} \\
 &  - \frac{1}{p_2-p_3} \frac{1}{k+p_2} + \frac{1}{p_1-p_3} \frac{1}{k+p_1} ]
\end{split}
\end{equation}
The recursion formula for $m$ reciprocals in terms of the formula for $m-1$ reciprocals is:
\begin{equation}
 \prod_{i=1}^{m-1} \frac{1}{k+p_i} = \sum_{i=1}^{m-1} \alpha_i \frac{1}{k+p_i}
\end{equation}
\begin{equation}
\begin{split}
 \prod_{i=1}^m \frac{1}{k+p_i} & = \sum_{i=1}^{m-1} \alpha_i \frac{1}{k+p_m}\frac{1}{k+p_i} \\
 & =  - \sum_{i=1}^{m-1} \alpha_i \frac{1}{p_i-p_m} \frac{1}{k+p_i} \\
 & + \frac{1}{k+p_m} \sum_{i=1}^{m-1} \alpha_i \frac{1}{p_i-p_m}
\end{split}
\end{equation}
This recursion formula means that starting with $m=1$ and $\alpha_1=1$,
in each pass for certain $m>1$ the $\alpha_i$ for $i=1\cdots m-1$ are divided by $p_m-p_i$,
after which $\alpha_m$ is minus the sum of the new $\alpha_i$ for $i=1\cdots m-1$.
This way the recursion formula reduces to a double iteration, and it is also clear from this
that for $m>1$:
\begin{equation}\label{alphazero}
 \sum_{i=1}^m \alpha_i = 0
\end{equation}
When the $\alpha_i$ have been computed, each individual reciprocal can be summed using the
appropriate formula in the previous section, where the following substitutions are made:
\begin{equation}
 H_{n+p}^{(m)} = H_{n+1}^{(m)} + \sum_{k=2}^p \frac{1}{(n+k)^m}
\end{equation}
\begin{equation}
 H_{n-p+1}^{(m)} = H_{n+1}^{(m)} - \sum_{k=0}^{p-1} \frac{1}{(n-k+1)^m}
\end{equation}
After these substitutions the coefficient of $H_{n+1}^2$ in the formula for each individual reciprocal
is identical, and therefore, by equation (\ref{alphazero}), the coefficient of $H_{n+1}^2$ in the resulting
formula for $m>1$ is zero, which means that for these products of these reciprocals only terms
linear in harmonic numbers remain.

\section{Examples}

\begin{equation}
 \sum_{k=0}^n \frac{1}{k+1} H_k = \frac{1}{2} ( H_{n+1}^2 - H_{n+1}^{(2)} )
\end{equation}
\begin{equation}
 \sum_{k=0}^n \frac{1}{k+2} H_k = \frac{1}{2} ( H_{n+1}^2 - H_{n+1}^{(2)} ) + \frac{1}{n+2}H_{n+1} - \frac{n+1}{n+2}
\end{equation}
\begin{equation}
 \sum_{k=0}^n \frac{1}{k+3} H_k = \frac{1}{2} ( H_{n+1}^2 - H_{n+1}^{(2)} ) + \frac{2n+5}{(n+2)(n+3)}H_{n+1} - \frac{(n+1)(7n+20)}{4(n+2)(n+3)}
\end{equation}
\begin{equation}
 \sum_{k=1}^n \frac{1}{k} H_k = \frac{1}{2} ( H_n^2 + H_n^{(2)} )
\end{equation}
\begin{equation}
 \sum_{k=2}^n \frac{1}{k-1} H_k = \frac{1}{2} ( H_{n+1}^2 + H_{n+1}^{(2)} ) - \frac{2n+1}{n(n+1)}H_{n+1} + \frac{n}{n+1}
\end{equation}
\begin{equation}
 \sum_{k=3}^n \frac{1}{k-2} H_k = \frac{1}{2} ( H_{n+1}^2 + H_{n+1}^{(2)} ) - \frac{3n^2-1}{(n-1)n(n+1)}H_{n+1} + \frac{7n^2-n-2}{4n(n+1)}
\end{equation}
\begin{equation}
 \sum_{k=1}^n \frac{1}{k(k+1)}H_k = H_{n+1}^{(2)} - \frac{1}{n+1} H_{n+1}
\end{equation}
\begin{equation}
 \sum_{k=0}^n \frac{1}{(k+1)(k+2)}H_k = \frac{n+1}{n+2} - \frac{1}{n+2} H_{n+1}
\end{equation}
\begin{equation}
 \sum_{k=2}^n \frac{1}{k(k-1)}H_k = \frac{2n+1}{n+1} - \frac{1}{n} H_{n+1}
\end{equation}
\begin{equation}
 \sum_{k=3}^n \frac{1}{(k-1)(k-2)}H_k = \frac{9n^2+5n-2}{4n(n+1)} - \frac{1}{n-1}H_{n+1}
\end{equation}
\begin{equation}
 \sum_{k=1}^n \frac{1}{k(k+1)(k+2)}H_k = \frac{1}{2} H_{n+1}^{(2)} - \frac{1}{2(n+1)(n+2)} H_{n+1} - \frac{n+1}{2(n+2)}
\end{equation}
\begin{equation}
 \sum_{k=2}^n \frac{1}{(k+1)k(k-1)}H_k = \frac{5n+3}{4(n+1)} - \frac{1}{2n(n+1)} H_{n+1} - \frac{1}{2} H_{n+1}^{(2)}
\end{equation}
\begin{equation}
 \sum_{k=3}^n \frac{1}{k(k-1)(k-2)}H_k = \frac{2n^2+2n-1}{4n(n+1)} - \frac{1}{2n(n-1)} H_{n+1}
\end{equation}
\begin{equation}
\begin{split}
 \sum_{k=2}^n \frac{1}{(k+2)(k+1)k(k-1)}H_k & = \frac{23n^2+57n+28}{36(n+1)(n+2)} \\
 & - \frac{1}{3n(n+1)(n+2)} H_{n+1} - \frac{1}{3} H_{n+1}^{(2)}
\end{split}
\end{equation}
\begin{equation}
\begin{split}
 \sum_{k=3}^n \frac{1}{(k+1)k(k-1)(k-2)}H_k & = \frac{1}{6} H_{n+1}^{(2)} - \frac{1}{3(n-1)n(n+1)} H_{n+1} \\
 & - \frac{2n^2+1}{12n(n+1)} \\
 &  
\end{split}
\end{equation}
\begin{equation}
 \sum_{k=0}^n \frac{1}{k+1} H_{n-k} =  H_{n+1}^2 - H_{n+1}^{(2)}
\end{equation}
\begin{equation}
 \sum_{k=0}^n \frac{1}{k+2} H_{n-k} =  H_{n+1}^2 - H_{n+1}^{(2)} - \frac{n}{n+2} H_{n+1}
\end{equation}
\begin{equation}
 \sum_{k=0}^n \frac{1}{k+3} H_{n-k} =  H_{n+1}^2 - H_{n+1}^{(2)} - \frac{3n^2+7n-2}{2(n+2)(n+3)} H_{n+1} - \frac{n+1}{(n+2)(n+3)}
\end{equation}
\begin{equation}
 \sum_{k=1}^n \frac{1}{k} H_{n-k} =  H_n^2 - H_n^{(2)}
\end{equation}
\begin{equation}
 \sum_{k=2}^n \frac{1}{k-1} H_{n-k} =  H_{n+1}^2 - H_{n+1}^{(2)} - \frac{2(2n+1)}{n(n+1)} H_{n+1} + \frac{2(3n^2+3n+1)}{n^2(n+1)^2}
\end{equation}
\begin{equation}
 \sum_{k=3}^n \frac{1}{k-2} H_{n-k} =  H_{n+1}^2 - H_{n+1}^{(2)} - \frac{2(3n^2-1)}{(n-1)n(n+1)} H_{n+1} + \frac{2(6n^4-3n^2+1)}{(n-1)^2n^2(n+1)^2}
\end{equation}
\begin{equation}
 \sum_{k=1}^n \frac{1}{k(k+1)}H_{n-k} = \frac{n-1}{n+1} H_{n+1} - \frac{n-1}{(n+1)^2} 
\end{equation}
\begin{equation}
 \sum_{k=0}^n \frac{1}{(k+1)(k+2)}H_{n-k} = \frac{n}{n+2} H_{n+1} 
\end{equation}
\begin{equation}
 \sum_{k=2}^n \frac{1}{k(k-1)}H_{n-k} = \frac{n-2}{n} H_{n+1} - \frac{(n-2)(2n+1)}{n^2(n+1)} 
\end{equation}
\begin{equation}
 \sum_{k=3}^n \frac{1}{(k-1)(k-2)}H_{n-k} = \frac{n-3}{n-1} H_{n+1} - \frac{(n-3)(3n^2-1)}{(n-1)^2n(n+1)} 
\end{equation}
\begin{equation}
 \sum_{k=1}^n \frac{1}{k(k+1)(k+2)}H_{n-k} = \frac{n^2+3n-2}{4(n+1)(n+2)} H_{n+1} - \frac{3n-1}{4(n+1)^2} 
\end{equation}
\begin{equation}
 \sum_{k=2}^n \frac{1}{(k+1)k(k-1)}H_{n-k} = \frac{n^2+n-4}{4n(n+1)} H_{n+1} - \frac{4n^3+3n^2-9n-4}{4n^2(n+1)^2} 
\end{equation}
\begin{equation}
 \sum_{k=3}^n \frac{1}{k(k-1)(k-2)}H_{n-k} = \frac{n^2-n-4}{4n(n-1)} H_{n+1} - \frac{(n^2-2n-1)(5n^2+3n-4)}{4(n-1)^2n^2(n+1)} 
\end{equation}
\begin{equation}
\begin{split}
 \sum_{k=2}^n \frac{1}{(k+2)(k+1)k(k-1)}H_{n-k} & = \frac{n^3+3n^2+2n-12}{18n(n+1)(n+2)} H_{n+1} \\
 & - \frac{7n^3+18n^2-25n-12}{36n^2(n+1)^2} 
\end{split}
\end{equation}
\begin{equation}
\begin{split}
 \sum_{k=3}^n \frac{1}{(k+1)k(k-1)(k-2)}H_{n-k} & = \frac{n^3-n-12}{18(n-1)n(n+1)} H_{n+1} \\
 & - \frac{9n^5+9n^4-41n^3-81n^2+32n+24}{36(n-1)^2n^2(n+1)^2} 
\end{split}
\end{equation}

\section{Computer Program}

The Mathematica$^{\textregistered}$ \cite{W03} program used to compute the expressions
is given below.

\begin{alltt}
HarmNumPlus[p_,m_]:=HarmonicNumber[n+1,m]+Sum[1/(n+k)^m,\{k,2,p\}]
HarmNumMinus[p_,m_]:=HarmonicNumber[n+1,m]-Sum[1/(n-k+1)^m,\{k,0,p-1\}]
HarmSumPPos[p_,d_]:=Simplify[
 HarmNumPlus[p,1](HarmonicNumber[n+1]+HarmonicNumber[p-1])
 -1/2((HarmonicNumber[n+1]+HarmonicNumber[p-1])^2
 +HarmonicNumber[n+1,2]+HarmonicNumber[p-1,2])
 -Sum[1/(n+k+2)HarmonicNumber[k],\{k,0,p-2\}]
 -Sum[1/(k+p)HarmonicNumber[k],\{k,0,d-1\}]]
HarmSumPNeg[p_,d_]:=Simplify[
 1/2((HarmNumMinus[p,1]+HarmonicNumber[p])^2
 +HarmNumMinus[p,2]+HarmonicNumber[p,2])
 -HarmonicNumber[n+1](HarmonicNumber[p]+1/(n-p+1))
 +Sum[1/(n-p+k+2)HarmonicNumber[k],\{k,0,p-1\}]
 -Sum[1/(k-p)HarmonicNumber[k],\{k,p+1,d-1\}]]
HarmSumP[p_,d_]:=If[p<=0,HarmSumPNeg[-p,d],HarmSumPPos[p,d]]
HarmSumQPos[p_,d_]:=Simplify[
 HarmonicNumber[n+1](HarmNumPlus[p,1]-HarmonicNumber[p-1])
 -1/2(HarmonicNumber[n+1]^2-HarmNumPlus[p,1]^2
 +HarmonicNumber[n+1,2]+HarmNumPlus[p,2])
 -Sum[1/(n+k+2)HarmonicNumber[k],\{k,0,p-2\}]
 -Sum[1/(k+p)HarmNumMinus[k+1,1],\{k,0,d-1\}]]
HarmSumQNeg[p_,d_]:=Simplify[
 HarmNumMinus[p+1,1]^2-HarmNumMinus[p+1,2]
 -Sum[1/(k-p)HarmNumMinus[k+1,1],\{k,p+1,d-1\}]]
HarmSumQ[p_,d_]:=If[p<=0,HarmSumQNeg[-p,d],HarmSumQPos[p,d]]
HarmTable[m_]:=Table[HarmonicNumber[n+1,i],\{i,m\}]
HarmSumPQ[s_Integer,f_]:=Module[\{d,u,t=HarmTable[2]\},
 d=If[s<=0,-s+1,0];u=Factor[CoefficientArrays[f[s,d],t]];
 u[[1]]+Dot[u[[2]],t]+Dot[Dot[u[[3]],t],t]]
HarmSumPQ[s_,f_]:=Module[\{facs,d,u,funs=0,l=Length[s],t=HarmTable[2]\},
 facs=Table[0,\{l\}];facs[[1]]=1;Do[Do[facs[[j]]/=(s[[i]]-s[[j]]);
 facs[[i]]-=facs[[j]],\{j,1,i-1\}],\{i,2,l\}];
 d=Min[s];d=If[d<=0,-d+1,0];Do[funs+=facs[[i]]f[s[[i]],d],\{i,1,l\}];
 u=Factor[CoefficientArrays[funs,t]];u[[1]]+Dot[u[[2]],t]]
HarmonicSumP[s_]:=If[Length[s]==1,HarmSumPQ[s[[1]],HarmSumP],
 HarmSumPQ[s,HarmSumP]]
HarmonicSumQ[s_]:=If[Length[s]==1,HarmSumPQ[s[[1]],HarmSumQ],
 HarmSumPQ[s,HarmSumQ]]

(* Compute some examples *)
HarmonicSumP[3]//TraditionalForm
HarmonicSumP[\{2,1,0,-1\}]//TraditionalForm
HarmonicSumQ[\{-2\}]//TraditionalForm
HarmonicSumQ[\{0,-1,-2\}]//TraditionalForm
\end{alltt}

\pdfbookmark[0]{References}{}


\begin{thebibliography}{99}
\bibitem{AS70}
  M. Abramowitz, I.A. Stegun,
  \textit{Handbook of Mathematical Functions},
  Dover, 1970.
\bibitem{GKP94}
  R.L. Graham, D.E. Knuth, O. Patashnik,
  \textit{Concrete Mathematics, A Foundation for Computer Science}, 2nd ed.,
  Addison-Wesley, 1994.
\bibitem{K97}
  D.E. Knuth,
  \textit{The Art of Computer Programming, Volume 1: Fundamental Algorithms}, 3rd ed.,
  Addison-Wesley, 1997.
\bibitem{K11a}
  M.J. Kronenburg,
  Some Generalized Harmonic Number Identities,
  \href{http://arxiv.org/abs/1103.5430}{{\tt arXiv:1103.5430}}{\tt~[math.NT]}
\bibitem{K11b}
  M.J. Kronenburg,
  On Two Types of Harmonic Number Identities,
  \href{http://arxiv.org/abs/1202.3981}{{\tt arXiv:1202.3981}}{\tt~[math.NT]}
\bibitem{L07}
  M.E. Larsen,
  \textit{Summa Summarum},
  Peters, 2007.
\bibitem{SLL89}
  D.Y. Savio, E.A. Lamagna, S.M. Liu,
  Summation of Harmonic Numbers, in
  \textit{Computers and Mathematics},
  eds. E. Kaltofen, S.M. Watt,
  Springer, 1989.
\bibitem{S90}
  J. Spie\ss,
  Some Identities Involving Harmonic Numbers,
  \textit{Math. Comp.} 55~(1990)~839-863.
\bibitem{W03}
  S. Wolfram, 
  \textit{The Mathematica Book}, 5th ed.,
  Wolfram Media, 2003.
\end{thebibliography}
\end{document}